\newtheorem{proposition}{Proposition}[section]
\newtheorem{remark}{Remark}[section]
\newtheorem{corollary}{Corollary}[section]
\newtheorem{theorem}{Theorem}[section]
\begin{document}


\markboth{O.S.\,Rozanova}{Stochastic perturbations method for a
system of Riemann invariants}
\title[Stochastic perturbations method for a system of Riemann invariants]
{Stochastic perturbations method for a system of Riemann
invariants\thanks{Supported by  RFBR Project Nr. 12-01-00308 and by
the government grant of the Russian Federation for projects
implemented by leading scientists, Lomonosov Moscow State University
under the agreement No. 11.G34.31.0054}}


\author[O.S. Rozanova]{Olga S. Rozanova}
\address{Department of Mechanics and Mathematics, Moscow
State University, Moscow 119991 Russia} \email{{\tt
rozanova@mech.math.msu.su} (O.S.Rozanova)}

\begin{abstract}
Basing on our results \cite{AKR} on a representation of  solutions
to the Cauchy problem for multidimensional non-viscous Burgers
equation obtained by a method of stochastic perturbation of the
associated Langevin system, we deduce an explicit asymptotic formula
for  smooth solutions to the Cauchy problem for any genuinely
nonlinear hyperbolic system of equations written in the Riemann
invariants.
\end{abstract}

\keywords{Riemann invariants, stochastic perturbation, the Cauchy
problem, representation of solution, associated conservation laws  }

\subjclass{35L40, 35L65}
\date{\today}

\maketitle


\section{Introduction}

We consider a system of the following form:
\begin{equation}\label{RI}
\frac{\partial r_i}{\partial t} + f_i(r_1,\dots,r_n)\frac{\partial
r_i}{\partial x}=0,\quad i=1,\dots,n,
\end{equation}
written for the functions $r_i=r_i(t,x), \,t\in\mathbb \overline
R_+,\,x\in\mathbb R,$ subject to initial data
\begin{equation}\label{RI_id}
r(0,x)=r^0(x)=(r^0_1(x),\dots,r^0_n(x)).
\end{equation}

We assume that the vector-function $f(r)\in C^1({\mathbb R}^n)$ is
real-valued and
\begin{equation}\label{nongenerate}
{\rm det} \left(\frac{\partial f_i(r)}{\partial r_j}\right) \ne
0,\quad i,j=1,...,n.
\end{equation}

It is well known that any quasilinear hyperbolic system  $u_t+
A(u)u_x=0$, where $u=u(t,x)$ is a $n$-vector ${\mathbb R}^2\to
{\mathbb R}^n$, $A(u)$ is a $(n\times n)$ matrix with smooth
coefficients, can be written in the Riemann invariants if it
consists of two equations. For three equations the system can be
written in the Riemann invariants is and only if the left
eigenvectors $l_k$ of $A$ satisfies either ${\rm rot}\, l_k=0$ or
$l_k \cdot {\rm rot}\, l_k=0$ \cite{Stepanov}. Among physically
meaningful systems admitting the Riemann invariants one can name the
equations of isentropic gas dynamics, equations of shallow water,
equations of electrophoresis and equations of chromatography
\cite{Rogd}, \cite{Wis}, \cite{Dafermos}.

\section{Extended system and stochastic differential equation associated with the equations of characteristics
}\label{perturb} Basing on (\ref{RI}) we consider a system
\begin{equation}\label{RI_e}
\frac{\partial q_i}{\partial t} + f_k(q_1,\dots,q_n)\frac{\partial
q_i}{\partial x_k}=0,\quad k=1,\dots,n,
\end{equation}
written for functions $q_i(t,x), \,x\in \mathbb{R}^n,$ together with
initial data
\begin{equation}\label{RI_id_e}
q(0,x)=q^0(x)=(q^0_1(x),\dots,q^0_n(x)).
\end{equation}
Let us suppose that $x_i,\,i=1,...,n,$ are in its turn functions of
one variable $\bar x\in {\mathbb R}$. In this case the system
\eqref{RI_e} should be rewritten as
\begin{equation}\label{RI_e1}
\frac{\partial q_i}{\partial t} + f_k(q_1,\dots,q_n)\frac{\partial
q_i}{\partial x_k}\frac{d x_k(\bar x)}{d\bar x}=0,\quad
k,i=1,\dots,n,
\end{equation}
It can be readily checked that if we set
\begin{equation}\label{q_r}q_i(t,0,\dots,\underbrace{\bar x}_{i-\rm th \,place},\dots,0):=r_i(t,\bar x),
\end{equation}
 such
that $\frac{\partial q_i}{\partial x_m}=0,\,i\ne m,$ and
\begin{equation}\label{q_r_0} q_i^0(0,\dots,\underbrace{\bar
x}_{i-\rm th \,place},\dots,0):=r_i^0(\bar x),\end{equation} $
i,m=1,\dots,n,$ then the vector-function $r(t,\bar x)$ solve the
problem \eqref{RI}, \eqref{RI_id}.

We associate with (\ref{RI_e})  the following system of stochastic
differential equations:

\begin{equation}\label{R_Rim_inv_SDE}
\begin{array}{cc}
dX_i(t)=f_i(Q_1(t),..., Q_n(t))dt+\sigma_1 d(W_i^1)_t,\\ \\
dQ_i(t)=\sigma_2\, d(W_i^2)_t,\quad i=1,...,n,\\  \\X_i(0)=x_i,\quad
Q_i(0)=q_i,\quad t>0,\end{array}
\end{equation}
where $X(t)$ and $Q(t)$ - are random values with given initial data,
$(X(t),Q(t))$ takes values in the phase space
$\mathbb{R}^n\times\mathbb{R}^n,$ $\sigma_1$ and $\sigma_2$ - are
positive constants, $|\sigma|\ne 0$ ($\sigma=(\sigma_1,\sigma_2)$),
è $(W^j)_t=(W^j_1,\dots,W^j_n)_t$, $j=1,2$, are independent
Brawnnian motions.

Let $P(t,d q_1,...,d q_n,dx_1,...,dx_n),\, t\in \mathbb R_+,\,x_i\in
\mathbb R,\,q_i\in \mathbb R,\,i=1,..,n,$  be  the joint probability
distribution of the random variables $(Q,X), $ subject to the
initial data
\begin{equation}
\label{P0} P_0(dr,dx)= \prod\limits_{i=1}^n \delta_q(q_i^0(x_i))
\rho_i^0(x_i)\,dx=\delta_q(q^0(s))\,\rho^0(s),
\end{equation}
 where $\rho_i^0(x_i)$ are  bounded nonnegative
functions from $C ({\mathbb R})$ and $dx$ is Lebesgue measure on
${\mathbb R}^n$,
$\delta_q$ is Dirac measure concentrated on $q$.

We look at  $P=P(t,dq,dx)$ as a generalized function (distribution)
with respect to the variable $q$. It satisfies the Fokker-Planck
equation
\begin{equation}
\label{Fok-Plank} P_t+\sum\limits_{i=1}^n\,f_i(q)
P_{x_i}=\frac{\sigma_1^2}{2}\sum\limits_{i=1}^n\, P_{x_i
\,x_i}+\frac{\sigma_2^2}{2}\sum\limits_{i=1}^n\, P_{q_i \,q_i}
\end{equation}
with initial data (\ref{P0}).


There is a standard procedure for finding the fundamental solution
for \eqref{Fok-Plank} (see, e.g. \cite{Friedman}). This procedure
consists in a  reduction of the equation to a Fredholm integral
equation, the solution of which can be found in the form of series.
We are going to show that one can also find an explicit solution to
the Cauchy problem \eqref{Fok-Plank}, \eqref{P0}.

Let us introduce, still in the general case, the functions, for
$t\in \mathbb R_+,\,x\in \mathbb R^n$, depending on
$\sigma=(\sigma_1, \sigma_2)$:
\begin{equation}
\label{rho_i_s} \rho_{i,\sigma}(t,x_i)=\int\limits_{{\mathbb
R}^{2n-1}}\,P(t,x,dr)\,d\breve{x_i},
\end{equation}
\begin{equation}
\label{rho_s} \rho_{\sigma}(t,x)=\int\limits_{{\mathbb
R}^{n}}\,P(t,x,dq),
\end{equation}
\begin{equation}
\label{q_s}q_{i,\sigma}(t,x)=\frac{\int\limits_{{\mathbb
R}^{n}}\,q_i\,P(t,x,dq)}{\int\limits_{{\mathbb R}^{n}}\,P(t,x,dq)},
\end{equation}
\begin{equation}
\label{f_s} f_{i,,\sigma}(t,x)=\frac{\int\limits_{{\mathbb
R}^{n}}\,f_i(q)\,P(t,x,dq)}{\int\limits_{{\mathbb
R}^{n}}\,P(t,x,dq)},
\end{equation}
where
$$
d\breve{x_i}\,=\,dx_1\,..\,dx_{i-1}\,dx_{i+1}\,...\,dx_n,\quad
dr\,=\,dr_1...dr_n.
$$

We can consider these values if the integrals  exist in the Lebesgue
sense.

It will readily be observed that $q_{i,\sigma}(0,x)=q_i^0(x)$ and
$f_{i,\sigma}(0,x)=f(q_i^0(x))$.

We denote
$$\bar \rho_i (t,\bar x)=\lim\limits_{\sigma\to 0} \rho_{i,\sigma}(t,\bar x),
\quad \bar \rho (t, x)=\lim\limits_{\sigma\to 0}
\rho_{\sigma}(t,x),$$$$ \bar
q_{i,\sigma}(t,x)=\lim\limits_{\sigma\to 0} q_{i,\sigma}(t,x),\quad
\bar f_i (t, x)=\lim\limits_{\sigma\to 0} f_{i,\sigma}(t, x),$$
provided these limits exist,  $\sigma = \sqrt{\sigma_1^2+\sigma_2^2}
$.

\section{Explicit probability density function}

The equation \eqref{Fok-Plank} can be solved explicitly. Moreover,
for the sake of simplicity we set $\sigma_2=0$ and denote
$\sigma_1=\sigma$.

\begin{proposition}\label{Prop} The problem
\eqref{Fok-Plank}, \eqref{P0} has the following solution:
\begin{eqnarray}\label{s_plotn}
 P(t,x,dq) =\hspace{10cm}     \\
 \dfrac1{(\sqrt{2\pi
t}\sigma)^n}\int\limits_{\mathbb{R}^n}\,\delta_r(r^0(s))\,\rho^0(s)\,\exp\left(
-\frac{\sum\limits_{i=1}^n (f_i(r^0_1(s_1),...,r^0_n(s_n))\,t
\,+\,(s_i-x_i))^2}{2\sigma^2 t}\right) \,ds, \nonumber \quad
\end{eqnarray}
for $\quad t\ge 0, \,x\in \mathbb{R}^n,$ therefore
\begin{eqnarray}\label{s_plotn_1}
 \int\limits_{{\mathbb R}^n} \, \phi(q)\,P(t,x,dq) = \hspace{10cm}     \\=\dfrac1{(\sqrt{2\pi
t}\sigma)^n}\int\limits_{\mathbb{R}^n}\,\phi(r^0(s))\,\rho^0(s)\,
\,\exp\left( -\frac{\sum\limits_{i=1}^n
(f_i(r^0_1(s_1),...,r^0_n(s_n))\,t \,+\,(s_i-x_i))^2}{2\sigma^2
t}\right) \,ds,\nonumber
\end{eqnarray}
for all $\phi(r)\in C_0({\mathbb R}^n)$.

\end{proposition}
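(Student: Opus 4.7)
The plan is to exploit the fact that setting $\sigma_2 = 0$ completely decouples the $Q$-dynamics in (\ref{R_Rim_inv_SDE}): one has $dQ_i = 0$, hence $Q_i(t) \equiv Q_i(0)$ along every sample path, and the $X$-equation then integrates explicitly to
\begin{equation*}
X_i(t) \;=\; X_i(0) \,+\, f_i(Q(0))\,t \,+\, \sigma\,(W_i^1)_t.
\end{equation*}
Conditional on $(X(0),Q(0)) = (s,q)$, the law of $(X(t),Q(t))$ is therefore the product of a Dirac mass concentrated at $q$ in the $q$-coordinate with $n$ independent one-dimensional Gaussians in the $x$-coordinates, each with mean $s_i + f_i(q)\,t$ and variance $\sigma^2 t$.

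Formula (\ref{s_plotn}) is then obtained by averaging this conditional distribution against the prescribed initial distribution $P_0$ given in (\ref{P0}). Because $P_0$ concentrates $q$ at $q^0(s)$ and weighs the $s$-variable by $\rho^0(s)\,ds$, substituting $q = q^0(s)$ into the Gaussian factor and integrating against $\rho^0(s)\,ds$ produces precisely the right-hand side of (\ref{s_plotn}); the squared sum $\sum_i (s_i + f_i(q^0(s))\,t - x_i)^2$ in the exponent is simply the log-density of the product Gaussian. Formula (\ref{s_plotn_1}) follows at once by pairing both sides with $\phi \in C_0(\mathbb{R}^n)$ and using $\int \phi(q)\,\delta_q(q^0(s))\,dq = \phi(r^0(s))$.

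As an independent sanity check one can verify directly that the right-hand side of (\ref{s_plotn}) satisfies the Fokker-Planck equation (\ref{Fok-Plank}) with $\sigma_2 = 0$: for each frozen $s$ the Gaussian factor is, by construction, the fundamental solution of the transport-diffusion operator $\partial_t + \sum_{i=1}^n f_i(q^0(s))\,\partial_{x_i} - \tfrac{\sigma^2}{2}\,\Delta_x$ (whose transport coefficients are constants in $x$ because the $q$-variable is pinned by the Dirac mass), and the initial condition (\ref{P0}) is recovered in the limit $t \to 0^+$ by the standard approximate-identity property of the heat kernel.

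The only genuine delicacy is notational rather than analytical: $P$ must be interpreted as a distribution (generalized function) in $q$, so that the Dirac mass can sit inside (\ref{s_plotn}) unambiguously. This is harmless once one agrees to pair with $\phi \in C_0(\mathbb{R}^n)$ as in (\ref{s_plotn_1}), and since the $\rho_i^0$ are bounded continuous by hypothesis, all the integrals in (\ref{s_plotn_1}) converge absolutely, allowing Fubini to be applied without further justification. There is no conceptual obstacle beyond this bookkeeping — the whole point of killing $\sigma_2$ is precisely to reduce the problem to explicit linear Gaussian integration.
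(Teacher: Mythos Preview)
Your argument is correct and complete, but it takes a genuinely different route from the paper's own proof. The paper treats (\ref{Fok-Plank}) purely analytically: it applies the Fourier transform in the $x$-variable to reduce the Fokker--Planck equation to a first-order ODE in $t$ for $\tilde P(t,\lambda,dq)$, integrates that ODE explicitly, and then takes the inverse Fourier transform, completing the square in $\lambda$ to produce the Gaussian factor; Fubini is invoked to exchange the $\lambda$- and $s$-integrals. By contrast, you bypass the PDE entirely and go straight to the SDE (\ref{R_Rim_inv_SDE}): with $\sigma_2=0$ the $Q$-component is frozen, the $X$-component is a Brownian motion with constant drift, and the law of $(X(t),Q(t))$ is read off directly as a Gaussian in $x$ times a Dirac in $q$, then averaged against $P_0$. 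Your approach is more conceptual and arguably shorter, since it explains \emph{why} the answer is a shifted heat kernel rather than discovering this after a Fourier computation; the paper's approach, on the other hand, generalizes more mechanically to the case $\sigma_2\neq 0$ (cf.\ Remark~3.1), where the SDE no longer integrates by inspection.
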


\begin{proof} Let us apply the Fourier transform to $P(t,x,dq)$ in
(\ref{Fok-Plank}), (\ref{P0}) with respect to the variable $x$ and
obtain the Cauchy problem for the Fourier transform
$\tilde{P}=\tilde{P}(t,\lambda,dq)$ of $P(t,x,dq)$:
\begin{equation}
\label{preobr_Fok-Plank} \dfrac{\partial \tilde{P}}{\partial
t}=-(\dfrac12\sigma^2|\lambda|^2+i(\lambda, f(q)))\tilde{P},
\end{equation}
\begin{equation}
\label{preobr_P0}\tilde{P}(0,\lambda,dq)=\int\limits_{\mathbb{R}^n}e^{-i(\lambda,s)}\delta_q(q^0(s))\rho^0(s)ds,
\qquad \lambda \in {\mathbb R}^n.
\end{equation}
Equation (\ref{preobr_Fok-Plank}) can easily be  integrated and we
obtain the solution given by the following formula:
\begin{equation}
\label{preobr_P}\tilde{P}(t,\lambda,dq)=\tilde{P}(0,\lambda,dq)e^{-\frac12\sigma^2|\lambda|^2t
+i  (\lambda,f(q))\,t}.
\end{equation}
The inverse Fourier transform (in the distributional sense) allows
to find the density function $P(t,x,dq),\,t>0$:
$$P(t,x,dq)=\dfrac1{(2\pi)^{n}}
\int\limits_{\mathbb{R}^n}e^{i(\lambda,x)}\tilde{P}(t,\lambda,dq)\,d\lambda
=$$
$$=\dfrac1{(2\pi)^{2n}}\int\limits_{\mathbb{R}^n}e^{i(\lambda,x)}
\left(\int\limits_{\mathbb{R}^n}e^{-i(\lambda,s)}e^{-i
(\lambda,f(q)) \, t}\,\delta_q(q^0(s))\,\rho^0(s)ds\right)
\,e^{-\frac12\sigma^2|\lambda|^2t}d\lambda=$$
$$=\dfrac1{(2\pi)^{n}}\int\limits_{\mathbb{R}^n}\delta_q(q^0(s))\,\rho^0(s)
\int\limits_{\mathbb{R}^n}e^{-\frac12\sigma^2t\left(\lambda-\frac{i|f(q)t+s-x|}{\sigma^2t}\right)^2-\frac{|f(q)t
+s-x|^2}{2\sigma^2t}}d\lambda ds=$$
$$
=\dfrac1{(\sqrt{2\pi
t}\sigma)^n}\int\limits_{\mathbb{R}^n}\,\delta_q(q^0(s))\,\rho^0(s)\,e^{-\frac{|
f(q^0(s))t+s-x|^2}{2\sigma^2t}}ds,\quad t\ge
0,\,x\in \mathbb{R}^n. $$
The third equality is satisfied by Fubini's theorem, which can be
applied by the absolute integrability and the bound on the function
involved. Thus, the proposition is proved.
\end{proof}

\begin{remark} In the general case $\sigma_2\ne 0$ an analogous
formula can be obtained in a similar way (see \cite{KR} in this
context).
\end{remark}

\begin{corollary} \label{corol} The functions ${\rho}_\sigma$, ${q}_\sigma$ and
$f_\sigma$ defined in \eqref{rho_s} -- \eqref{f_s}  can be
represented by the following formulae:
\begin{equation}
\label{rho_s_rep} {\rho}_\sigma(t,x)=\dfrac1{(\sqrt{2\pi
t}\sigma)^n}
{\int\limits_{\mathbb{R}^n}\rho^0(s)\,e^{-\frac{\sum\limits_{i=1}^{n}
|{f_i(r^0(s))t+s_i-x_i|^2}}{2\sigma^2t}} ds},
\end{equation}
\begin{equation}
\label{q_s_rep} {q}_{\sigma,i}(t,x)=
\dfrac{\int\limits_{\mathbb{R}^n}r^0(s)\rho^0(s)\,e^{-\frac{\sum\limits_{i=1}^{n}|f_i(r^0(s))t
+ s_i-x_i|^2}{2\sigma^2t}} ds}
{\int\limits_{\mathbb{R}^n}\rho^0(s)\,e^{-\frac{\sum\limits_{i=1}^{n}
|f_i(r^0(s))t+s_i-x_i|^2}{2\sigma^2t}} ds},\quad i=1,...,n,
\end{equation}
\begin{equation}
\label{f_s_rep} {f}_{\sigma,i}(t,x)=
\dfrac{\int\limits_{\mathbb{R}^n}f(r^0(s))\rho^0(s)\,e^{-\frac{\sum\limits_{i=1}^{n}|f_i(r^0(s))t
+ s_i-x_i|^2}{2\sigma^2t}} ds}
{\int\limits_{\mathbb{R}^n}\rho^0(s)\,e^{-\frac{\sum\limits_{i=1}^{n}
|f_i(r^0(s))t+s_i-x_i|^2}{2\sigma^2t}} ds},\quad i=1,...,n.
\end{equation}

\end{corollary}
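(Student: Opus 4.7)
The plan is to derive all three formulas as straightforward consequences of Proposition~\ref{Prop}, in particular its integrated form \eqref{s_plotn_1}. Each of the three quantities $\rho_\sigma$, $q_{\sigma,i}$, $f_{\sigma,i}$ is, by its definition \eqref{rho_s}--\eqref{f_s}, an integral of $P(t,x,dq)$ against a specific function of $q$; so the strategy is simply to test \eqref{s_plotn_1} against the right $\phi$ and read off the result.

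First I would take $\phi\equiv 1$ to obtain $\rho_\sigma(t,x)=\int_{\mathbb R^n}P(t,x,dq)$. Since $\int_{\mathbb R^n}\delta_q(q^0(s))\,dq=1$, formula \eqref{s_plotn_1} collapses to \eqref{rho_s_rep}. Next, for $q_{\sigma,i}$, I would take $\phi(q)=q_i$: the sifting property of the Dirac measure replaces $q_i$ by $q_i^0(s)=r_i^0(s_i)$ (using \eqref{q_r_0}) in the integrand, and dividing by the already-computed $\rho_\sigma$ yields \eqref{q_s_rep}. Finally, for $f_{\sigma,i}$, the choice $\phi(q)=f_i(q)$ replaces $f_i(q)$ by $f_i(r^0(s))$ under the delta, and dividing by $\rho_\sigma$ gives \eqref{f_s_rep}.

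The only genuine technical point is that \eqref{s_plotn_1} is stated for $\phi\in C_0(\mathbb R^n)$, whereas the three test functions above ($1$, $q_i$, and $f_i(q)$) are not compactly supported. I would handle this by an approximation argument: truncate $\phi$ by a smooth cutoff $\chi_R$ with $\chi_R\equiv 1$ on $|q|\le R$, apply \eqref{s_plotn_1} to $\phi\chi_R$, and pass to the limit $R\to\infty$. Convergence on both sides is justified by dominated convergence, using the hypothesis that $\rho^0$ is bounded and nonnegative on $\mathbb R$ together with the fact that, after evaluation of the Dirac measure, the $q$-integration disappears and one is left with an $s$-integral whose integrand is controlled by the Gaussian factor $\exp(-|f(r^0(s))t+s-x|^2/(2\sigma^2 t))$ times $\rho^0(s)$ (and a polynomial factor in the case of $q_i$ or $f_i$).

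The only conceivable obstacle is ensuring the absolute integrability in $s$ needed for this truncation argument, which requires mild growth control of $r^0$ and $f_i\circ r^0$ relative to the Gaussian — exactly the kind of assumption already implicit in the statement that the defining integrals \eqref{rho_s}--\eqref{f_s} exist in the Lebesgue sense. Once this is in place, the three identities are immediate and no further computation is required.
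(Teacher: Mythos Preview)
Your proposal is correct and matches the paper's own proof, which simply says the result follows by substituting the explicit expression for $P(t,x,dq)$ from Proposition~\ref{Prop} into the definitions \eqref{rho_s}--\eqref{f_s}. Your additional remarks on approximating the non-compactly-supported test functions $1$, $q_i$, $f_i(q)$ by cutoffs make explicit a technical point the paper leaves implicit, but the underlying approach is identical.
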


\begin{proof}  The result is obtained by substitution of
$P(t,x,dq)$ as given by \eqref{s_plotn} in \eqref{rho_s},
\eqref{q_s}, \eqref{f_s}.\end{proof}

\section{Representation of smooth solution to \eqref{RI}, \eqref{RI_id}}\label{asympt_formula}

Now we are going to prove that if $\rho_i$ and  $\bar q_i$ are
continuous, then
 $\bar r_i(\bar x)=\bar q_i(t,x)|_{\{x_j=0, \,x_i=\bar x\}},$ $j\ne i,$ tend to the solution
$r_i(t,\bar x)$ of  the problem  \eqref{RI}, \eqref{RI_id}  as
$\sigma \to 0$.

Namely, the following theorem holds.

\begin{theorem}\label{T1} Let $r(t,\bar x)$ be a solution to the Cauchy problem
\eqref{RI}, \eqref{RI_id},
 $r^0\in
C^1_b({\mathbb R})$ and $t_*(r^0)$ be the supremum of $t$ such that
this solution  is smooth. Then for $t\in [0, t_*(r^0)),$
$$r_i(t,\bar x)=\bar q_i(t, x)|_{\{x_j=0, \,x_i=\bar x\}}=\lim\limits_{\sigma\to 0} q_{\sigma,i}(t,x)|_{\{x_j=0, \,x_i=\bar x\}},\,j\ne i,$$
where $q_\sigma(t,x)$ is given by (\ref{q_s_rep}) and the limit
exists pointwise.
\end{theorem}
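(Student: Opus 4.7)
The plan is to reduce the theorem to a Laplace-method asymptotic analysis of the ratio of integrals in~\eqref{q_s_rep} as $\sigma\to 0$, and then identify the resulting limit with the $1$D solution via the construction of Section~\ref{perturb}.

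Substituting $x=(0,\ldots,\bar x,\ldots,0)$ with $\bar x$ in the $i$-th slot into~\eqref{q_s_rep} writes $q_{\sigma,i}|_{\{x_j=0,\,x_i=\bar x\}}$ as a ratio of two Gaussian-type integrals on $\mathbb R^n$ sharing the exponent $-\Phi(s;t,\bar x)/(2\sigma^2 t)$, where
\[
\Phi(s;t,\bar x)=\bigl(f_i(r^0(s))t+s_i-\bar x\bigr)^2+\sum_{j\ne i}\bigl(f_j(r^0(s))t+s_j\bigr)^2,
\]
with the denominator carrying the weight $\rho^0(s)$ and the numerator $r_i^0(s_i)\rho^0(s)$. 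The critical-point equations $\nabla_s\Phi=0$ are exactly the straight-line characteristic system $s_k+f_k(r^0(s))t=x_k$ ($k=1,\ldots,n$) of the extended equation~\eqref{RI_e}; for $t\in[0,t_*(r^0))$ this has a unique solution $s^*(t,\bar x)$, and $\Phi(s^*)=0$. By~\eqref{nongenerate} and the smoothness hypothesis, the Jacobian $I+t\,Df(r^0(s^*))\,Dr^0(s^*)$ of the map $s\mapsto s+f(r^0(s))t$ is invertible throughout $[0,t_*(r^0))$, so the Hessian $\Phi''(s^*)$ is positive definite; moreover $r^0\in C^1_b$ makes $f(r^0(\cdot))$ bounded, so $\Phi(s;t,\bar x)\sim |s|^2$ as $|s|\to\infty$, which supplies a Gaussian tail bound.

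Applying Laplace's method to numerator and denominator, both integrals admit the asymptotic $g(s^*)\rho^0(s^*)(2\pi\sigma^2 t)^{n/2}(\det\Phi''(s^*))^{-1/2}$ (with $g\equiv 1$ for the denominator and $g(s)=r_i^0(s_i)$ for the numerator), and the common prefactors cancel to give
\[
\lim_{\sigma\to 0}q_{\sigma,i}(t,x)\big|_{\{x_j=0,\,x_i=\bar x\}}=r_i^0(s_i^*(t,\bar x)).
\]
To close the argument I invoke the identification~\eqref{q_r}--\eqref{q_r_0} from Section~\ref{perturb}: under the ansatz $\partial q_i/\partial x_m=0$ for $m\ne i$, the axis restriction of $\bar q_i$ solves the $1$D Cauchy problem~\eqref{RI},~\eqref{RI_id}, and by uniqueness of smooth solutions on $[0,t_*(r^0))$ this restriction equals $r_i(t,\bar x)$.

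The hard part will be making the Laplace analysis rigorous on the non-compact domain $\mathbb R^n$: specifically, showing that $s^*$ is the unique \emph{global} minimum of $\Phi$ (not merely a local non-degenerate one) uniformly on every $[0,t_*(r^0)-\varepsilon]$, and producing a tail estimate sharp enough to survive the normalization $(\sigma^2 t)^{-n/2}$. Uniqueness of $s^*$ is essentially the non-crossing-of-characteristics condition for~\eqref{RI_e}, which is exactly why $t_*(r^0)$ is the natural cut-off. A secondary subtlety is that the identification of $\bar q_i|_{\{x_j=0,\,x_i=\bar x\}}$ with $r_i(t,\bar x)$ uses the axial ansatz only at the level of the $\sigma\to 0$ limit, and this passage needs to be made precise using smoothness of the solution throughout the interval.
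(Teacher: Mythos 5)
Your Laplace-method analysis of the ratio \eqref{q_s_rep} is sound as far as it goes, and it is in substance a self-contained re-derivation of the representation result of \cite{AKR} that the paper merely cites: the paper's own proof multiplies \eqref{ext_Burgers} by $\nabla_q f_i$, substitutes $u=f(q)$ to land on the multidimensional Burgers equation \eqref{equ_Burg}, quotes \cite{AKR} for $\lim_{\sigma\to0}u_\sigma=u$, and then returns to $q$ via Proposition~\ref{Prop}. Your route is more direct, and your list of what must be verified (that $s^*$ is the unique global minimizer of $\Phi$ for $t<t_*$, coercivity of $\Phi$ from boundedness of $f\circ r^0$, positive definiteness of the Hessian via invertibility of $I+tDf\,Dr^0$) is the right list; the only omission shared with the paper is the need for $\rho^0(s^*)>0$. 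What this analysis delivers is
$\lim_{\sigma\to 0}q_{\sigma,i}(t,x)=r_i^0\bigl(s_i^*(t,x)\bigr)$ with $s^*+t\,f\bigl(r_1^0(s_1^*),\dots,r_n^0(s_n^*)\bigr)=x$, i.e.\ the classical solution of the \emph{extended} system \eqref{RI_e}, all of whose components are transported along the single vector field $f(q)$ on straight characteristics.

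The genuine gap is the step you label a ``secondary subtlety'' --- the identification of the axis restriction of this limit with $r_i(t,\bar x)$ --- and it is precisely the step the paper disposes of with the phrase ``we use the redesignation \eqref{q_r}.'' Your closing argument is: the axial ansatz $\partial q_i/\partial x_m=0$ ($m\ne i$) reduces \eqref{RI_e} to \eqref{RI} on the $i$-th axis, hence by uniqueness of smooth solutions the restriction equals $r_i$. But that ansatz is not propagated by \eqref{RI_e}: since $s^*(t,x)$ solves a coupled implicit system, $s_i^*$ depends on all coordinates of $x$ for $t>0$, so $\partial_{x_m}\bar q_i\ne 0$ for $m\ne i$. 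Equivalently, the candidate $q_i(t,x):=r_i(t,x_i)$ satisfies
\begin{equation*}
\partial_t q_i+\sum_k f_k(q)\,\partial_{x_k}q_i=\partial_t r_i(t,x_i)+f_i\bigl(r_1(t,x_1),\dots,r_n(t,x_n)\bigr)\,\partial_{x_i}r_i(t,x_i),
\end{equation*}
which vanishes only where $x_1=\dots=x_n$ (because \eqref{RI} requires $f_i$ evaluated at $r(t,\bar x)$ with a common argument), and in particular not on the $i$-th axis; so it is not a solution of \eqref{RI_e} and uniqueness cannot be invoked. The discrepancy is not cosmetic: in \eqref{RI} each $r_i$ rides its own curved characteristic $\dot\xi=f_i(r(\tau,\xi))$ along which the other invariants vary, whereas $r_i^0(s_i^*)$ is built from straight characteristics of a single field. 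A second-order perturbative computation with $f_i(r)=c_i+\sum_j a_{ij}r_j$ and data $\epsilon\phi$, $\phi\in C_b^1$, gives for the axis restriction of the extended solution the cross terms $-\epsilon^2 a_{ij}\,\phi_i'(\bar x-c_it)\,\phi_j(-c_jt)\,t$, while the solution of \eqref{RI} carries $-\epsilon^2 a_{ij}\,\phi_i'(\bar x-c_it)\int_0^t\phi_j(\bar x-c_it+(c_i-c_j)\tau)\,d\tau$; these disagree whenever $a_{ij}\ne0$ for some $j\ne i$, $c_i\ne c_j$ and $\phi_j$ is nonconstant. So the identification step cannot be completed as written for genuinely coupled systems; any correct argument must replace the axial (or diagonal) restriction of \eqref{RI_e} by a construction that actually reproduces the $n$ distinct characteristic flows of \eqref{RI}, and you should be aware that this difficulty is inherited from, not resolved by, the paper's own proof.
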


\begin{proof} The easiest way to prove the theorem is reducing of
\eqref{RI_e} to the multidimensional non-viscous Burgers equation
and using the representation from \cite{AKR}. Namely, \eqref{RI_e}
has the form
\begin{equation}\label{ext_Burgers}
\partial_t q+(f(q),\nabla)q=0,
\end{equation}
where $q(t,x)=(q_1,...,q_n)$ is a vector-function
$\mathbb{R}^{n+1}\rightarrow\mathbb{R}^n,$ $f(q)$ is a
non-degenerate differential  mapping from ${\mathbb R}^n\,$ to
$\,{\mathbb R}^n, $ such that its Jacobian satisfies the condition
${\rm det} \frac{\partial f_i(q)}{\partial q_j} \ne 0,$
$i,j=1,...,n,$ due to \eqref{nongenerate}.

We  multiply (\ref{ext_Burgers}) by $\nabla_q f_i(q),\,i=1,...,n,$
to get
\begin{equation}\label{ext_Burgers1}
\partial_t f(q)+(f(q),\nabla)f(q)=0.
\end{equation}
Thus, we can introduce a new vectorial variable $u=f(q)$ to reduce
the Cauchy problem for (\ref{ext_Burgers1}) to
\begin{equation}
\label{equ_Burg} \partial_t u+(u,\nabla)u=0,\,t>0,\qquad
u(x,0)=u_0(x)\in C^1(\mathbb{R}^n)\cap C_b(\mathbb{R}^n).
\end{equation}
 with $u_0(x)=f(q_0(x)).$
As follows from \cite{AKR}, the solution to the nonviscous Burgers
equation \eqref{equ_Burg} before the moment $t_*$ of a singularity
formation can be obtained as a pointwise limit as $\sigma\to 0$ of
\begin{equation}
\label{sol_u_sdu}
{u}_\sigma(t,x)=\dfrac{\int\limits_{\mathbb{R}^n}u_0(s)\rho^0(s)e^{-\frac{|u_0(s)t+s-x|^2}{2\sigma^2t}}ds}
{\int\limits_{\mathbb{R}^n}\rho^0(s)e^{-\frac{|u_0(s)t+s-x|^2}{2\sigma^2t}}ds}.
\end{equation}

Therefore we  find the representation of the solution to the
stochastically perturbed along the characteristics  equation
(\ref{ext_Burgers1}) using the formula (\ref{sol_u_sdu}) with
$f(q_0(x))$ instead of $u_0(x).$

Now we can go back to the vector-function $q(t,x)$. As follows from
Proposition~\ref{Prop},   $\bar q_i(t, x)=\lim\limits_{\sigma\to 0}
q_{\sigma,i}(t,x).$  At last, we use the redesignation \eqref{q_r}
to obtain the statement of  theorem \ref{T1}.
\end{proof}

Further, we can find the maximal time $t_*(r^0)$ of existence of the
smooth solution to the problem \eqref{RI}, \eqref{RI_id}.

\begin{theorem}\label{T2}
If at least one derivative
\begin{equation}\label{cond_t_*}
\frac{\partial{f_i(q)}}{\partial q_i}\,\frac{d r_i^0(x)}{d x}, \quad
i=1,\dots,n,
\end{equation}
is negative, then the time $t_*(r^0)$ of existence of the smooth
solution to \eqref{RI}, \eqref{RI_id} is finite and
\begin{equation}\label{t_*}
t_*(r^*)=\min\limits_{i}\left\{-\left({\frac{\partial{f_i(q)}}{\partial
q_i}\,\frac{d r_i^0(x)}{d x}}\right)^{-1}\right\}.
\end{equation}
Otherwise, $t_*(r^0)=\infty.$
\end{theorem}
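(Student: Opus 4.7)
My plan is to exploit the characteristic structure of the extended quasilinear system \eqref{ext_Burgers}, already invoked in Theorem~\ref{T1}. Since $q$ is constant along the straight-line characteristics of \eqref{ext_Burgers}, the smooth solution admits the implicit representation $q(t,\Phi_t(y))=q^0(y)$, where $\Phi_t(y)=y+f(q^0(y))\,t$, and the classical solution persists exactly as long as $\Phi_t$ remains a local diffeomorphism, i.e.\ $\det D\Phi_t(y)\neq 0$. Under the identification \eqref{q_r}, smoothness of $r_i(t,\bar x)$ is equivalent to local invertibility of $\Phi_t$ at the preimage $y^{\ast}$ of the point $(0,\dots,\bar x,\dots,0)$ on the $i$-th coordinate axis.

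Next I would compute the Jacobian $D\Phi_t(y)=I+t\,Df(q^0(y))\cdot Dq^0(y)$. Because $q_i^0(x)$ depends only on $x_i$ by \eqref{q_r_0}, the matrix $Dq^0(y)$ is diagonal with entries $(r_i^0)'(y_i)$. The singularity time $t_{\ast}$ is then the first $t>0$ at which $\det\bigl(I+t\,Df(q^0(y))\cdot\mathrm{diag}((r_i^0)'(y_i))\bigr)=0$ for some $y$. Differentiating the relation $\Phi_t(y^{\ast})=(0,\dots,\bar x,\dots,0)$ in $\bar x$ yields $\partial_{\bar x}y^{\ast}=D\Phi_t(y^{\ast})^{-1}e_i$ and hence $\partial_{\bar x}r_i=(r_i^0)'(y_i^{\ast})\,(D\Phi_t^{-1})_{ii}$, which diverges precisely when the above determinant vanishes. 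Identifying the relevant diagonal factor $1+t\,\partial_{q_i}f_i(q^0(y^{\ast}))\,(r_i^0)'(y_i^{\ast})$ and taking the infimum over indices $i$ and points $\bar x$ where this quantity is negative would produce the expression \eqref{t_*}; if it is nonnegative for every $i$ and $\bar x$, the determinant stays positive for all $t>0$ and the solution persists globally.

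The principal obstacle is the reduction from the full determinant $\det D\Phi_t(y)$ to the product form $\prod_i\bigl(1+t\,\partial_{q_i}f_i(q^0(y))(r_i^0)'(y_i)\bigr)$, since for a general non-diagonal $Df(q)$ the off-diagonal entries contribute higher-order-in-$t$ cross terms both to the determinant and to the $(i,i)$-cofactor appearing in $(D\Phi_t^{-1})_{ii}$. I would attempt to handle this by exploiting the concentration of the Laplace integrals \eqref{q_s_rep}--\eqref{f_s_rep} near the $i$-th axis as $\sigma\to 0$: in that regime the relevant critical point $s^{\ast}$ satisfies $s_k^{\ast}=O(t)$ for $k\neq i$, and only the $i$-th column of $Du^0$ should drive the loss of smoothness of $r_i$ along the axis, so that the single factor $1+t\,\partial_{q_i}f_i(r^0(\bar x))(r_i^0)'(\bar x)$ governs the first blow-up. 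Making this localization precise, and verifying that it indeed yields exactly the formula \eqref{t_*}, is where I expect to spend most of the effort.
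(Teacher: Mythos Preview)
Your approach coincides with the paper's at the structural level: both reduce to the extended Burgers system \eqref{ext_Burgers1}--\eqref{equ_Burg} from Theorem~\ref{T1} and identify the breakdown time with the first $t>0$ at which $I+t\,J\bigl(f(q^0(x))\bigr)$ becomes singular. The paper, however, does not rederive the characteristic-flow argument; it simply invokes the Levine--Protter result \cite{protter} (Theorem~1 there) for the multidimensional pressureless Burgers equation, which states that the classical solution to \eqref{equ_Burg} breaks down in finite time iff $J(u_0(x))$ has a negative eigenvalue for some $x$, and that the breakdown time is the first $t$ at which $\det\bigl(I+tJ(u_0(x))\bigr)=0$. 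Substituting $u_0=f(q^0)$ and using the diagonal structure of $Dq^0$ is then the entire argument.

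Regarding the obstacle you single out: the paper does \emph{not} resolve the off-diagonal issue via Laplace asymptotics of \eqref{q_s_rep}--\eqref{f_s_rep}, nor by any other analytic localization. It simply asserts that, under the identification \eqref{q_r}--\eqref{q_r_0}, the eigenvalue criterion for $J\bigl(f(q^0(x))\bigr)$ ``means that at least one derivative \eqref{cond_t_*} is negative'' and that \eqref{t_*} follows from the invertibility condition. So your concern is well-founded in principle, but the detour through the stochastic representation is not how the paper proceeds and is not needed to match its level of detail. If you want to align with the paper, drop the Laplace-integral localization entirely, cite \cite{protter} for the Burgers breakdown criterion, and pass directly from the Jacobian $J\bigl(f(q^0(x))\bigr)$ (whose $j$-th column is $\partial_{q_j}f\cdot (r_j^0)'(x_j)$) to the stated diagonal expressions; the paper treats that last reduction as immediate.
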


\begin{proof}  Let
us  come back to the problem \eqref{equ_Burg} and denote by
$J(u_0(x))$ the Jacobian matrix of the map $\,x\mapsto u_0(x)$.  As
it was shown in \cite{protter}\,(Theorem 1), if $J(u_0(x))$ has at
least one eigenvalue which is negative for a certain point
$x\in{\mathbb R}^n,$  then the classical solution to
(\ref{equ_Burg}) fails to exist beyond a positive time $t_*(u_0).$
Otherwise, $t_*(u_0)=\infty.$ The matrix $C(t,x) = (I+t J(u_0(x))),$
where $\,I\,$ is the identity matrix, fails to be invertible for
$t=t_*(u_0).$ Thus, due to representation \eqref{ext_Burgers1}, if
$J(f(q^0(x)))$ has at least one eigenvalue which is negative for a
certain point $x\in{\mathbb R}^n,$  then the classical solution to
(\ref{RI_e}) fails to exist beyond a positive time $t_*(q^0).$ For
the problem \eqref{RI}, \eqref{RI_id} this means that as least one
derivative \eqref{cond_t_*} is negative. The value \eqref{t_*} can
be found from the condition of invertibility of the matrix $(I+t
J(f(q_0(x)))$.
\end{proof}

\begin{remark} It easy to see that Theorem \ref{T2} can be
considered  as a version of Theorem 7.8.2 \cite{Dafermos}, with a
specification of the blow up time, obtained by a different method.
\end{remark}

\section{Balance laws, associated with systems written in the Riemann
invariants} Now we get one more corollary of results of \cite{AKR}.
Let us denote $$g_{i,\sigma}(t,\bar x)=f_{i,\sigma}(t, x)|_{\{x_j=0,
\,x_i=\bar x\}},\quad \bar g_i (t,\bar x)=\lim\limits_{\sigma\to 0}
g_{i,\sigma}(t,\bar x).$$


\begin{theorem}\label{T3} The functions $\rho_{i, \sigma}$ and
$g_{i,\sigma}$  satisfy  the following system of $\,2n\,$ equations:
\begin{equation}
\label{sist_rho}\dfrac{\partial\rho_{i,\sigma}}{\partial
t}\,+\,\partial_x (\rho_{i,\sigma}
g_{i,\sigma})\,=\,\dfrac12\sigma^2\dfrac{\partial^2\rho_{i,\sigma}}{\partial
x^2},
\end{equation}
\begin{equation}
\label{sist_f}\dfrac{\partial(\rho_{\sigma,i}{g}_{\sigma,i})}{\partial
t}\,+\,
\partial_x(\rho_{\sigma,i}\,{g}^2_{\sigma,i})\,=
\end{equation}
\begin{equation}\nonumber
\dfrac12\sigma^2\dfrac{\partial^2(\rho_{\sigma,i}
g_{\sigma,i})}{\partial
x^2}\,-\,\int\limits_{\mathbb{R}^{2n-1}}(g_{\sigma,i}\,-\,\bar
g(r)_i)\,\big((g_{\sigma}\,-\,{\bar g(r)}),\nabla_{ x}
P(t,x,dq)\big)d \breve{x}.
\end{equation}
For $t\in (0, t_*(r^0))$ its limit functions $\bar\rho_{i}$ and
$\bar g_{i}$ satisfy the system of $\,2 n\,$ conservation laws:
\begin{equation}
\label{sist_rho_i_l}\dfrac{\partial\bar\rho_{i}}{\partial
t}\,+\,\partial_x (\bar\rho_{i} \bar g_{i})\,=\,0,
\end{equation}
\begin{equation}
\label{sist_f_i_l}\dfrac{\partial(\bar\rho_{i}{\bar
g}_{i})}{\partial t}\,+\,
\partial_x(\bar\rho_{i}\,{\bar g}^2_{i})\,=\,0,
\end{equation}
$ i=1,..,n,\,t\ge 0.$
\end{theorem}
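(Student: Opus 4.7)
The plan is to derive the system \eqref{sist_rho}--\eqref{sist_f} by taking moments of the Fokker--Planck equation \eqref{Fok-Plank} (with $\sigma_2=0$), marginalizing to one spatial coordinate, and then letting $\sigma\to 0$ to obtain \eqref{sist_rho_i_l}--\eqref{sist_f_i_l}.

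First I would integrate \eqref{Fok-Plank} against $dq$ over $\mathbb{R}^n$. Using definitions \eqref{rho_s} and \eqref{f_s}, the drift term collapses to $\sum_k \partial_{x_k}(\rho_\sigma f_{k,\sigma})$ and one obtains the $n$-dimensional continuity equation
\begin{equation*}
\partial_t \rho_\sigma + \sum_k \partial_{x_k}(\rho_\sigma f_{k,\sigma}) = \frac{\sigma^2}{2}\sum_k \partial^2_{x_k} \rho_\sigma.
\end{equation*}
Next, since $f_i(q)$ does not depend on $(t,x)$, multiplying \eqref{Fok-Plank} by $f_i(q)$ and integrating against $dq$ yields
\begin{equation*}
\partial_t(\rho_\sigma f_{i,\sigma}) + \sum_k \partial_{x_k}\!\!\int_{\mathbb{R}^n}\! f_i(q) f_k(q)\,P\,dq = \frac{\sigma^2}{2}\sum_k \partial^2_{x_k}(\rho_\sigma f_{i,\sigma}).
\end{equation*}
The splitting $\int f_i f_k P\,dq = \rho_\sigma f_{i,\sigma}f_{k,\sigma} + \int(f_i - f_{i,\sigma})(f_k-f_{k,\sigma})\,P\,dq$ isolates a Reynolds-type fluctuation contribution, which will become the non-viscous source term in \eqref{sist_f}.

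The reduction to one variable comes from integrating both identities over $d\breve{x}_i$. Thanks to the Gaussian factor in \eqref{s_plotn}, $P$ and all its relevant moments decay in every $x_k$, so the divergence theorem eliminates all $k\ne i$ contributions in the fluxes and in the Laplacian, leaving only the $k=i$ terms. This produces precisely \eqref{sist_rho} and \eqref{sist_f}, with the fluctuation source in \eqref{sist_f} being the $k=i$ slice of the Reynolds decomposition, written as in the statement in terms of $g_\sigma-\bar g$.

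For the passage to the limit, I would exploit the explicit representation \eqref{s_plotn}: on $[0,t_*(r^0))$ the characteristic map is a diffeomorphism (Theorem~\ref{T2}), so as $\sigma\to 0$ the Gaussian concentrates on the unique classical pre-image and $P(t,x,\cdot)$ tends weakly in $q$ to a Dirac mass at $\bar q(t,x)$. Consequently $f_i(q)-f_{i,\sigma}$ vanishes in $L^2(P\,dq)$, the fluctuation source disappears pointwise in $\bar x$, and the viscous term is trivially $O(\sigma^2)$, delivering \eqref{sist_rho_i_l}--\eqref{sist_f_i_l}. The chief obstacle I anticipate is making this limit rigorous uniformly in $\breve{x}_i$: one must show that the conditional variance of $f(q)$ under $P(t,x,\cdot)$ is $O(\sigma^2)$ on the relevant support, via a stationary-phase argument applied to \eqref{s_plotn} using the smoothness of the characteristic map on $[0,t_*(r^0))$, and then invoke dominated convergence to interchange the limit with both the $d\breve{x}_i$-integration and the spatial derivatives.
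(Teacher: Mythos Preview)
Your derivation of the moment hierarchy from the Fokker--Planck equation, together with the Reynolds decomposition of $\int f_i f_k\,P\,dq$, is precisely the content of Theorem~2.1 in \cite{AKR}, which the paper simply cites to obtain the $n$-dimensional system \eqref{sist_rho_q}--\eqref{sist_f_q}. At that level the two arguments coincide; yours is self-contained rather than quoted, which is a virtue but not a new idea.

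The real difference is in the reduction from $x\in\mathbb{R}^n$ to the single variable $\bar x$. You integrate the $n$-dimensional identities over $d\breve{x}_i$ and use the Gaussian decay of $P$ to drop the transverse fluxes. The paper instead \emph{restricts} to the line $x_j=0$, $x_i=\bar x$ and simultaneously chooses $\rho_j^0\equiv 1$ for $j\ne i$; under this specialization the $n$-dimensional system collapses directly to \eqref{sist_rho}--\eqref{sist_f}.

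Here is the gap in your route. After marginalizing the continuity equation you get
\[
\partial_t \rho_{i,\sigma}\;+\;\partial_{x_i}\!\int_{\mathbb{R}^{n-1}} \rho_\sigma\, f_{i,\sigma}\,d\breve{x}_i\;=\;\tfrac12\sigma^2\,\partial^2_{x_i}\rho_{i,\sigma},
\]
whereas the flux in \eqref{sist_rho} is $\partial_x(\rho_{i,\sigma}\,g_{i,\sigma})$ with $g_{i,\sigma}(t,\bar x)=f_{i,\sigma}(t,x)\big|_{x_j=0,\,x_i=\bar x}$, a \emph{restriction} of $f_{i,\sigma}$ and not its $\breve{x}_i$-average $\rho_{i,\sigma}^{-1}\!\int\rho_\sigma f_{i,\sigma}\,d\breve{x}_i$. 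These two objects are not equal in general, and the same mismatch recurs in the momentum equation (and in identifying $\int\rho_\sigma f_{i,\sigma}^2\,d\breve{x}_i$ with $\rho_{i,\sigma}\,g_{i,\sigma}^2$). The paper's choice $\rho_j^0\equiv1$ together with the product initial data \eqref{q_r_0} is exactly the mechanism that makes the restriction and the marginal agree; without invoking that structure, your integration does not land on the system as stated. You should either adopt the paper's specialization explicitly, or replace $g_{i,\sigma}$ by the marginal mean $\rho_{i,\sigma}^{-1}\!\int\rho_\sigma f_{i,\sigma}\,d\breve{x}_i$ and then verify that, for the data used here, this coincides with the restricted $f_{i,\sigma}$.
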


\begin{proof} The statement follows from Theorem 2.1 \cite{AKR}.
Namely, the theorem implies that the scalar function
$\rho_\sigma(t,x)$ and the vector-function $f_\sigma (t,x)$ solve
the following system:
\begin{equation}
\label{sist_rho_q}\dfrac{\partial\rho_\sigma}{\partial t}\,+\,{\rm
div}_x (\rho_\sigma
f_\sigma)\,=\,\dfrac12\sigma^2\sum\limits_{k=1}^{n}\dfrac{\partial^2\rho_\sigma}{\partial
x_k^2},
\end{equation}
\begin{equation}
\label{sist_f_q}\dfrac{\partial(\rho_\sigma f_{i,\sigma})}{\partial
t}\,+\, \nabla(\rho_\sigma\,f_{i,\sigma}\,f_\sigma)\,=
\end{equation}
\begin{equation}
\,\dfrac12\sigma^2\sum\limits_{k=1}^{n}\dfrac{\partial^2(\rho_\sigma
f_{\sigma,i})}{\partial
x_k^2}-\,\int\limits_{\mathbb{R}^n}(f_{i,\sigma}\,-\, \bar
f_i)\,\big((f_\sigma\,-\,{\bar f}),\nabla_x P(t,x,dq)\big),\nonumber
\end{equation}
with $ i=1,..,n,\,t\ge 0,$ and the integral term vanishes as
$\sigma\to 0$. To obtain the statement of Theorem \ref{T3}, it is
sufficient to set $x_j=0, \,x_i=\bar x$ and $\rho_j(x_j)=1,$ $j\ne
i,$ for every fixed $i$.
\end{proof}

\begin{remark} System \eqref{sist_rho_i_l}, \eqref{sist_f_i_l}
constitutes $n$ systems of so called "pressureless" gas dynamics,
the simplest model introduced to describe the formation of large
structures in the Universe, see, e.g. \cite{Shand}.

\end{remark}

\begin{remark} The method of stochastic perturbations allows to study solutions to quasilinear systems
written in Riemann invariants at the moment of the singularity
formation and the shock waves evolution as well (see in this context
\cite{AR}, \cite{AKR}, \cite{KR} for simpler cases). In particular,
it is possible to prove that after the moment $t_*(r^0)$ of
singularity formation in the solution to the problem \eqref{RI},
\eqref{RI_id} the limit system for $\rho_{i, \sigma}$ and
$g_{i,\sigma}$ differs from \eqref{sist_rho_i_l}, \eqref{sist_f_i_l}
and contains an additional integral term in the group of equations
\eqref{sist_f_i_l}. This term does not vanish as $\sigma \to 0$ and
can be considered as gradient of a specific pressure term.
\end{remark}

\end{document}